\newtheorem{theorem}{Theorem}
\theoremstyle{remark}
\newtheorem{rem}{Remark}
\renewcommand{\Pr}{\mathbf{P}}
\newcommand{\E}{\mathbf{E}}
\newcommand{\R}{\mathbb{R}}
\newcommand{\Rp}{\mathbb{R}_+}
\newcommand{\I}{\textbf{I}}
\title{A note on Veraverbeke's theorem}
\author{Stan Zachary}
\date{\today}
\begin{document}


 \begin{center}
   \Large\bfseries A note on Veraverbeke's theorem
 \end{center}

 \begin{center}
   Stan Zachary
   \footnotetext{
     {\it American Mathematical Society 1991 subject classifications.\/}
     Primary 60G70; secondary  60K30, 60K25

     {\it Key words and phrases.\/} ruin probability, long-tailed
     distribution, subexponential distribution.

     }
 \end{center}

 \begin{center}
   \textit{Heriot-Watt University\\Edinburgh}
 \end{center}

\begin{quotation}\small
  We give an elementary probabilistic proof of Veraverbeke's Theorem
  for the asymptotic distribution of the maximum of a random walk with
  negative drift and heavy-tailed increments.  The proof gives
  insight into the principle that the maximum is in general attained
  through a single large jump. 
\end{quotation}

\section{Introduction}

Veraverbeke's Theorem (Veraverbeke (1977), Embrechts and Veraverbeke
(1982)) gives the asymptotic distribution of the maximum~$M$ of a
random walk with negative drift and heavy-tailed increments, and has
well-known applications to both queueing theory and risk theory.  A
simpler treatment of this result is given, for example, by Embrechts
\textit{et al.}\ (1997) or Asmussen (2000), and is based on renewal
theory and ascending ladder heights.  Nevertheless the underlying
intuition of the result is that the only significant way in which a
high value of the maximum can be attained is through ``one big jump''
by the random walk away from its mean path.  We give here a relatively
short proof from first principles which captures this intuition.  It
is similar in spirit to the existing probabilistic proof, but by
considering instead a first renewal time at which the random walk
exceeds a ``tilted'' level, the argument becomes more elementary.  In
particular subsequent renewals have an asymptotically negligible
probability under appropriate limits, and results from renewal
theory---notably the derivation and use of the Pollaczeck-Khinchine
formula---are not required.  We consider separately lower and upper
bounds, as a slightly weaker condition is required for the former.
Further, this treatment facilitates the derivation of bounds for
$\Pr(M>x)$ for any $x$---see Remark~\ref{rem:2} below.  The argument
leading to the lower bound is implicit in much of the literature (see,
for example, Asmussen \textit{et al.}\ (1999), Korshunov (2002),
Baccelli and Foss (2003, Theorem~4)).  We give it here for
completeness.  The argument leading to the upper bound is new.

\section{Veraverbeke's Theorem.}

We recall first some definitions and known properties.  For any
distribution function~$H$ on $\R$ let $\overline{H}(x)=1-H(x)$ for all
$x$.  A distribution function~$H$ on $\Rp$ is \emph{subexponential} if
and only if $\overline{H}(x)>0$ for all $x$ and
\begin{equation}\label{eq:1}
  \lim_{x\to\infty}\overline{H^{*n}}(x)/\overline{H}(x) = n,
  \qquad n\ge2,
\end{equation}
where $H^{*n}$ is the $n$-fold convolution of $H$ with itself.  (It is
sufficient to verify the condition~\eqref{eq:1} in the case~$n=2$.)
More generally, a distribution function~$H$ on $\R$ is subexponential
if and only if $H^+$ is subexponential, where $H^+=H\I_{\R_+}$ and
$\I_{\R_+}$ is the indicator function of $\R_+$.  In this case the
condition~\eqref{eq:1} continues to hold, but is no longer sufficient
for subexponentiality.  It is well known that if $H$ is
subexponential, then $H$ is \emph{long-tailed}, i.e.
\begin{equation}\label{eq:2}
  \lim_{x\to\infty}\frac{\overline{H}(x-h)}{\overline{H}(x)} = 1,
  \quad\text{for all fixed $h\in\R$}.
\end{equation}
(See, for example, Embrechts \textit{et al.}\ (1997, Lemma 1.3.5).  It
is of course sufficient to require that (\ref{eq:2}) hold for $h>0$;
the extension to $h<0$ follows by taking reciprocals.)  

For any distribution function~$H$ on $\R$ with finite mean define also
the integrated, or \emph{second-tail}, distribution function~$H^s$ by
\begin{equation}\label{eq:3}
  \overline{H^s}(x) = \min \bigl( 1,
    \int_x^{\infty} \overline{H}(t)\, dt
  \bigr).
\end{equation}
(When $H$ is supported on $\Rp$, the second-tail distribution is
usually, and in this case more naturally, defined instead by
$\overline{H^s}(x)=\mu^{-1}\int_x^{\infty}\overline{H}(t)\,dt$, where
$\mu=\int_0^{\infty}\overline{H}(t)\,dt$ is the mean of $H$.)

Now let $\{\xi_n\}_{n\ge1}$ be independent identically distributed
random variables with distribution function~$F$ such that
\begin{equation}\label{eq:4}
  \E\xi_1=-a<0.
\end{equation}
Let $S_0=0$, $S_n=\sum_{i=1}^n\xi_i$ for $n\ge1$.  Let
$M_n=\max_{0\le{}i\le{}n}S_i$ for $n\ge0$ and let $M=\sup_{n\ge0}S_n$.
Clearly $\Pr(M<\infty)=1$.  We are interested in the asymptotic
distribution of $\Pr(M>x)$ as $x\to\infty$.  Under an appropriate further
condition on the second-tail distribution function~$F^s$ this is
given by Theorem~1 below.  Part~(ii) of this result is Veraverbeke's
Theorem.  The proof we give is also well-adapted to the possibility of
obtaining bounds for $\Pr(M>x)$ for any $x$.

\begin{theorem}[Veraverbeke]~
  \begin{enumerate}
  \item[(i)] Suppose that, in addition to \eqref{eq:4}, $F^s$ is
    long-tailed.  Then
    \begin{equation}
      \label{eq:5}
      \liminf_{x\to\infty}\frac{\Pr(M>x)}{\overline{F^s}(x)} \ge \frac{1}{a}.
    \end{equation}
  \item[(ii)] Suppose that, in addition to \eqref{eq:4}, $F^s$ is
    subexponential.  Then
    \begin{equation}\label{eq:6}
      \lim_{x\to\infty}\frac{\Pr(M>x)}{\overline{F^s}(x)} = \frac{1}{a}.
  \end{equation}
  \end{enumerate}
\end{theorem}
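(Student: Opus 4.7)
The plan is to establish (i) and (ii) separately, each by a ``one big jump'' decomposition anchored on the first time at which a linearly tilted threshold is exceeded; the tilt slope is sent to the correct limit at the end to recover the constant $1/a$.

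For part~(i), fix $h>a$ and define $T=\inf\{n\ge1:\xi_n>x+hn\}$, the first exceptionally large increment. The events $\{T=n\}$ are disjoint, and on $\{T=n\}\cap\{S_{n-1}>-hn\}$ one has $S_n=S_{n-1}+\xi_n>x$, so $M>x$. Using independence of $\xi_n$ from $(\xi_1,\dots,\xi_{n-1})$, the inequality $\Pr(A\cap B)\ge \Pr(A)-\Pr(B^c)$, and a union bound,
\[
\Pr(T=n,\,S_{n-1}>-hn)\ge \overline{F}(x+hn)\Bigl(\Pr(S_{n-1}>-hn)-\sum_{k<n}\overline{F}(x+hk)\Bigr).
\]
Summed over $n$, the cross-term contributes $O(\overline{F^s}(x)^2)=o(\overline{F^s}(x))$. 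Because $h>a$, the strong law gives $\Pr(S_{n-1}>-hn)\to1$; combining with the Riemann-sum estimate $\sum_n\overline{F}(x+hn)\sim\overline{F^s}(x)/h$ and the long-tailedness consequence $\overline{F}(x)=o(\overline{F^s}(x))$ yields $\Pr(M>x)\ge(1-o(1))\overline{F^s}(x)/h$; letting $h\downarrow a$ completes (i).

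For part~(ii), fix $h\in(0,a)$ and work with the tilted first-passage time $\sigma=\inf\{n\ge1:S_n>x-hn\}$. Since $\{M>x\}\subseteq\{\sigma<\infty\}$, it suffices to prove $\Pr(\sigma<\infty)\le(1+o(1))\overline{F^s}(x)/(a-h)$ and then let $h\downarrow0$. On $\{\sigma=n\}$ one has $\xi_n>x-hn-S_{n-1}$; when $S_{n-1}$ is near its typical value $-(n-1)a$, this forces $\xi_n\gtrsim x+(a-h)n$, the single big jump. I would split $\sum_n\Pr(\sigma=n)$ according as $S_{n-1}$ is typical (say $S_{n-1}\le-(a-h/2)(n-1)$) or atypical; the typical part reduces via the same Riemann-sum estimate to $(1+o(1))\overline{F^s}(x)/(a-h)$, while the atypical part is absorbed using subexponentiality of $F^s$---this is what the paper's preamble has in mind when it says that ``subsequent renewals have an asymptotically negligible probability''. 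The main obstacle is precisely this atypical-past contribution: long-tailedness alone does not preclude walks that drift moderately above their mean path through accumulated medium-sized jumps, and it is the convolution identity~\eqref{eq:1} for $F^s$ that licenses absorbing such paths into the $1+o(1)$ correction. Carrying this out elementarily, without invoking Pollaczek--Khinchine or ladder-height renewal theory, is the novel ingredient promised in the introduction, and is where I would expect most of the effort to go.
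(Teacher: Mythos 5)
Your part~(i) is correct and is a variant of the paper's lower bound. You decompose on the first time an \emph{increment} $\xi_n$ exceeds a tilted threshold $x+hn$, whereas the paper decomposes on the first time $n$ that $S_{n+1}>x$ and then intersects with the events $\{S_n>-L-n(a+\epsilon)\}$, $\{\xi_{n+1}>x+L+n(a+\epsilon)\}$. Both are ``one big jump'' lower bounds, and your cross-term estimate $O(\overline{F^s}(x)^2)=o(\overline{F^s}(x))$ is sound. One small difference worth noting: the paper takes a free constant $L$ and uses the Weak Law of Large Numbers to make $\Pr(S_n>-L-n(a+\epsilon))\ge 1-\delta$ \emph{uniformly} in $n$, so no splitting at a finite $N$ is needed; your version with threshold $-hn$ requires a short additional step to handle small $n$, where $\Pr(S_{n-1}>-hn)$ need not yet be close to $1$. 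This is easily patched (the $n<N$ terms are non-negative and can be discarded), but the paper's formulation is cleaner and, as Remark~\ref{rem:2} observes, yields explicit finite-$x$ bounds.

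Part~(ii) is where you have a genuine gap, and you are candid about it: you set up $\sigma=\inf\{n\ge1:S_n>x-hn\}$, split $\sum_n\Pr(\sigma=n)$ into typical and atypical pasts, and then declare that the atypical part ``is absorbed using subexponentiality of $F^s$'', but you do not produce the mechanism by which it is absorbed. This is precisely the content of the proof, and it does not follow from the typical/atypical split as you have set it up: paths with an atypically high $S_{n-1}$ at the first $\sigma$-crossing can exceed the tilted line through accumulated moderate increments, and nothing in the stopping at $\sigma$ alone controls this. The paper's device is different in a crucial way: the threshold is $R-n(a-\epsilon)$ with a \emph{fixed} $R$ (independent of $x$), and one iterates the stopping rule to get a chain of renewals $\tau_1\le\tau_2\le\cdots$ whose increments are i.i.d.\ by~\eqref{eq:8}. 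Because $R$ is fixed, $\gamma=\Pr(\tau_1<\infty)$ is a fixed constant that tends to $0$ as $R\to\infty$ by the SLLN, and the key inclusion $\{M>x\}\subseteq\bigcup_m\{M>x,\,S_{\tau_m}>x-R+a-\epsilon\}$ holds because between consecutive renewals the walk can rise by at most $R-(a-\epsilon)$. This reduces the atypical contribution to $\sum_m\gamma^m\overline{G^{*m}}(x-R+a-\epsilon)$, which is then handled by the Kesten-type uniform convolution bound~\eqref{eq:13} and dominated convergence, using subexponentiality of $G$ (inherited from $F^s$ via~\eqref{eq:11}). Your $\sigma$ depends on $x$, so $\Pr(\sigma<\infty)$ is not a small fixed constant, and there is no geometric series to sum; the renewal chain at a fixed tilted level is the missing idea.
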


\begin{proof}
  We derive first the lower bound given in~(i).  From \eqref{eq:4}, by
  the Weak Law of Large Numbers, given $\epsilon>0$, $\delta>0$, we
  can choose $L\equiv L_{\epsilon,\delta}$ such that
  \begin{equation}\label{eq:lb1}
    \Pr
    \left(S_n > -L - n(a+\epsilon)
    \right)
    \ge 1-\delta,
    \qquad
    n=0,1,2,\ldots.
  \end{equation}
  Then, for $x\ge0$,
  \begin{align*}
    \Pr(M>x)
    & = \sum_{n\ge0} \Pr(M_n\le x,\, S_{n+1}>x) \nonumber\\
    & \ge \sum_{n\ge0} \Pr(M_n\le x, \,
    S_n > -L - n(a+\epsilon), \,
    \xi_{n+1}>x+L+n(a+\epsilon)) \nonumber\\
    & = \sum_{n\ge0} \Pr(M_n\le x, \,
    S_n > -L - n(a+\epsilon))\,
    \Pr(\xi_{n+1}>x+L+n(a+\epsilon)) \nonumber\\
    & \ge \sum_{n\ge0} (1-\delta-\Pr(M_n>x))
    \overline{F}(x+L+n(a+\epsilon) \nonumber)\\
    & \ge (1-\delta-\Pr(M>x))
    \frac{\overline{F^s}(x+L)}{a+\epsilon},
  \end{align*}
  where the last line above follows since, from (\ref{eq:3}), for any
  $x$ and any constant~$b>0$,
  $\overline{F^s}(x)\le{}b\sum_{n\ge0}\overline{F}(x+nb)$.
  Rearranging, we obtain
  \begin{equation}
    \label{eq:lb}
    \Pr(M>x) \ge \frac{(1-\delta)\overline{F^s}(x+L)}%
    {a+\epsilon+\overline{F^s}(x+L)}
  \end{equation}
  Since $F^s$ is assumed long-tailed, it now follows that
  \begin{displaymath}
    \liminf_{x\to\infty}\frac{\Pr(M>x)}{\overline{F^s}(x)}
    \ge \frac{1-\delta}{a+\epsilon}.
  \end{displaymath}
  The result~(\ref{eq:5}) is thus obtained on letting $\delta,\epsilon\to0$.
  
  We now prove (ii).  Since $F^s$ is here assumed subexponential, it
  is in particular long-tailed.  Hence, by (i), it is sufficient to
  establish the upper bound associated with the result~(\ref{eq:6}).
  For a sequence of events $\{A_n\}$ we make the convention:
  $\min\{n\ge1:\I(A_n)=1\}=\infty$ if $\I(A_n)=0$ for all $n$.  Given
  $\epsilon\in(0,a)$ and $R>0$, define renewal times
  $0\equiv\tau_0<\tau_1\le\tau_2\le\dots$ for the process~$\{S_n\}$ by
  \begin{displaymath}\label{eq:7}
    \tau_1 = \min\{n\ge1:S_n>R-n(a-\epsilon)\} \le \infty,
  \end{displaymath}
  and, for $m\ge2$,
  \begin{align*}
    \tau_m & = \infty, \quad\text{if $\tau_{m-1}=\infty$},\\
    \tau_m & = \tau_{m-1}
    + \min\{n\ge1:S_{\tau_{m-1}+n}-S_{\tau_{m-1}}>R-n(a-\epsilon)\},
    \quad\text{if $\tau_{m-1}<\infty$}.
  \end{align*}
  Observe that
  \begin{equation}\label{eq:8}
    \{(\tau_m-\tau_{m-1},\,S_{\tau_m}-S_{\tau_{m-1}})\}_{n\ge1}
    \quad\text{ are i.i.d.}
  \end{equation}
  Then, again from (\ref{eq:4}), by the Strong Law of Large Numbers,
  \begin{equation}\label{eq:9}
    \gamma\equiv\Pr(\tau_1<\infty)\to0 \qquad\text{as $R\to\infty$}.
  \end{equation}
  Define also $S_\infty=-\infty$.  We now have that, for all
  sufficiently large $x$,
  \begin{align}
    \Pr(S_{\tau_1}>x) & = \sum_{n\ge1} \Pr(\tau_1=n,\,S_n>x) \nonumber\\
    & \le \sum_{n\ge1} \Pr(S_{n-1}\le
    R-(n-1)(a-\epsilon),\,S_n>x) \nonumber\\ & \le \sum_{n\ge1}
    \overline{F}(x-R+(n-1)(a-\epsilon)) \nonumber\\ & \le
    \frac{1}{a-\epsilon}\overline{F^s}(x-R-a+\epsilon).
    \label{eq:ub2}
  \end{align}
  where the last line above follows since, from (\ref{eq:3}), for all
  sufficiently large $x$ and any constant~$b>0$,
  $\overline{F^s}(x)\ge{}b\sum_{n\ge1}\overline{F}(x+nb)$.

  Let $\{\phi_m\}_{m\ge1}$ be independent identically distributed
  random variables such that
  \begin{displaymath}
    \Pr(\phi_1>x) = \Pr(S_{\tau_1}>x \,|\, \tau_1<\infty), \qquad
    x\in\mathbb{R}.
  \end{displaymath}
  Then, from \eqref{eq:ub2} and since $F^s$ is long-tailed,
  \begin{equation}
    \label{eq:10}
    \Pr(\phi_1>x) \le \overline{G}(x), \qquad  x\in\mathbb{R},
  \end{equation}
  for some distribution function~$G$ on $\R$ satisfying
   \begin{equation}
    \label{eq:11}
    \lim_{x\to\infty}\frac{\overline{G}(x)}{\overline{F^s}(x)}
    =\frac{1}{\gamma(a-\epsilon)}.
  \end{equation}
  We now have, for all $x>R-a+\epsilon$,
  \begin{align}
    \Pr(M>x) & \le \sum_{m\ge1}
    \Pr(M>x,\,S_{\tau_m}>x-R+a-\epsilon) \label{eq:15}\\
    & \le \sum_{m\ge1}
    \gamma^m\Pr(\phi_1+\dots+\phi_m>x-R+a-\epsilon) \label{eq:12}\\
    & \le \sum_{m\ge1} \gamma^m\overline{G^{*m}}(x-R+a-\epsilon), \label{eq:ub}
  \end{align}
  where (\ref{eq:12}) and (\ref{eq:ub}) follow from (\ref{eq:8}) and
  \eqref{eq:10} respectively.
  
  It follows from \eqref{eq:11}, the subexponentiality of $F^s$, and
  the well-known properties of subexponential distributions (see, for
  example, Embrechts \textit{et al.}\ (1997), Sigman (1999), or
  Asmussen (2000)) that $G$ is subexponential.  Hence from the
  well-known upper bound for convolutions of subexponential
  distributions (again see, for example, Embrechts \textit{et al.}\ 
  (1997, Lemma 1.3.5)), given any $k>1$, there exists $A>0$ such that
  \begin{equation}\label{eq:13}
    \frac{\overline{G^{*m}}(x)}{\overline{F^s}(x)}\le Ak^m
    \quad\text{for all $x$ and for all $m\ge0$,}
  \end{equation}
  and also, again from \eqref{eq:11} and by the subexponentiality of
  $G$, for all $m\ge1$,
  \begin{equation}
    \lim_{x\to\infty}\label{eq:14}
    \frac{\overline{G^{*m}}(x)}{\overline{F^s}(x)} =
    \frac{m}{\gamma(a-\epsilon)}.
  \end{equation}
  Hence by \eqref{eq:ub}, \eqref{eq:14}, the long-tailedness of $F^s$,
  and the dominated convergence theorem (justified by \eqref{eq:13} for
  any $k$ such that $k\gamma<1$),
  \begin{align*}
    \limsup_{x\to\infty}\frac{\Pr(M>x)}{\overline{F^s}(x)} & \le
    \frac{1}{a-\epsilon}\sum_{m\ge1}m\gamma^{m-1}\\
    &  = \frac{1}{(a-\epsilon)(1-\gamma)^2}.
  \end{align*}
  Now let $R\to\infty$, so that $\gamma\to0$ by \eqref{eq:9}, and
  then let $\epsilon\to0$ to obtain the required upper bound
  \begin{displaymath}
    \limsup_{x\to\infty}\frac{\Pr(M>x)}{\overline{F^s}(x)} \le \frac{1}{a},
  \end{displaymath}
  and hence the result~(\ref{eq:6}).
\end{proof}

\begin{rem}\label{rem:1}
  It follows from (\ref{eq:15}) and the subsequent argument that,
  given $\epsilon$, $R$, $\tau_1$, $\gamma$ as defined in the above
  proof of the upper bound,
  \begin{align}
    \lim_{x\to\infty}\frac{\Pr(M>x)}{\overline{F^s}(x)}
    & \le \limsup_{x\to\infty}
    \frac{\Pr(M>x,\,S_{\tau_1}>x-R+a-\epsilon)}{\overline{F^s}(x)}
    +\gamma\left(\frac{2-\gamma}{(a-\epsilon)(1-\gamma)^2}\right)
    \nonumber \\
    & =
    \limsup_{x\to\infty}\frac{\Pr(M>x,\,S_{\tau_1}>x)}{\overline{F^s}(x)}
    +\gamma\left(\frac{2-\gamma}{(a-\epsilon)(1-\gamma)^2}\right)
    \label{eq:16}\\
    & = (1+o(1))
    \limsup_{x\to\infty}\frac{\Pr(M>x,\,S_{\tau_1}>x)}{\overline{F^s}(x)}
    \quad\text{as $R\to\infty$.}
    \label{eq:17}
  \end{align}
  Here (\ref{eq:16}) follows from the observation that, for any $b>0$,
  $\Pr(M>x,\,S_{\tau_1}>x)\le\Pr(M>x,\,S_{\tau_1}>x-b)%
  \le\Pr(M>x-b,\,S_{\tau_1}>x-b)$, together with the long-tailedness
  of $F^s$ again, while (\ref{eq:17}) follows from (\ref{eq:9}).
  Thus, given \emph{any} $\epsilon\in(0,a)$, if $R$ is chosen
  sufficiently large, then as $x\to\infty$ the only significant way in
  which $M$ can exceed $x$ is that it should do so at the first
  time~$n$ such that $S_n$ exceeds $R-n(a-\epsilon)$.  This is the
  principle of ``one big jump''.
\end{rem}

\begin{rem}\label{rem:2}
  The proof of the lower bound above may be incorporated into that of
  the upper bound.  It is necessary to modify the
  definition of $\tau_1$ above to
  \begin{displaymath}
    \tau_1 = \min\{n\ge1:S_n<-R-n(a+\epsilon)
    \quad\text{or}\quad
    S_n > R-n(a-\epsilon)\}
  \end{displaymath}
  and to similarly modify the definition of the subsequent renewal
  times $\tau_2,\tau_3,\dots$ (see also Foss and Zachary (2002)).  The
  argument then proceeds in a two-sided version of that given above to
  obtain the final result more directly.  As a proof of Veraverbeke's
  Theorem this is somewhat tidier.  However, the separate proof of the
  lower bound as given here (which, as previously remarked, is part of
  the folklore of the theorem) makes it clear that only the condition
  that $F^s$ be long-tailed, rather than that it be subexponential, is
  required for that.  Further, the proof of the lower bound requires
  only the Weak, rather than the Strong, Law of Large Numbers.  Thus
  for given $\epsilon$, $\delta$, it is easy to determine a suitable
  constant~$L$ such that (\ref{eq:lb1}) holds, and so (\ref{eq:lb})
  may be used to give an explicit lower bound for $P(M>x)$ for any
  given $x$.  (Of course this should be optimised over $\epsilon$,
  $\delta$.)
  
  While the relation~(\ref{eq:ub}) in principle gives an upper bound
  for $P(M>x)$ for any given $x$ (in which, for small $\gamma$, the
  importance of accurately bounding convolutions is greatly reduced),
  the more explicit determination of useful upper bounds remains a
  challenging problem.  For further results on bounds in particular
  cases, including those of the Weibull and regularly varying
  distributions, see Kalashnikov and Tsitsiashvili (1999).
\end{rem}

Finally, we remark also that a proof of the converse of Veraverbeke's
Theorem---that (for $\E\xi_1$ finite and negative) the
relation~\eqref{eq:6} implies the subexponentiality of $F^s$---is given
by Korshunov (1997).

\section*{Acknowledgement}

The author is most grateful to Serguei Foss and to Takis
Konstantopoulos for some very helpful discussions, and to the referee
for some most helpful comments.

\newpage

\end{document}